\newcommand{\bd}{\begin{description}}
\newcommand{\ed}{\end{description}}
\newcommand{\bi}{\begin{itemize}}
\newcommand{\ei}{\end{itemize}}
\newcommand{\be}{\begin{enumerate}}
\newcommand{\ee}{\end{enumerate}}
\newcommand{\beq}{\begin{equation}}
\newcommand{\eeq}{\end{equation}}
\newcommand{\beqs}{\begin{eqnarray*}}
\newcommand{\eeqs}{\end{eqnarray*}}
\definecolor{DarkGreen}{rgb}{0.2, 0.6, 0.3}
\newtheorem{theorem}{Theorem}[section]
\newtheorem{lemma}{Lemma}[section]
\newtheorem{corollary}[theorem]{Corollary}
\begin{document}
\title{\textbf{Bounds for Gallai-Ramsey functions and numbers} \footnote{Supported by the National
Science Foundation of China (Nos. 11601254, 11551001, 11161037,
61763041, 11661068, and 11461054) and the Qinghai Key Laboratory of
Internet of Things Project (2017-ZJ-Y21).} }

\author{Zhao Wang\footnote{College of Science, China Jiliang University,
Hangzhou 310018, China. {\tt wangzhao@mail.bnu.edu.cn}}, \ \ Yaping
Mao\footnote{Corresponding author} \footnote{School of Mathematics
and Statistis, Qinghai Normal University, Xining, Qinghai 810008,
China. {\tt maoyaping@ymail.com}} \footnote{Academy of Plateau
Science and Sustainability, Xining, Qinghai 810008, China.}, \ \ Ran
Gu \footnote{College of Science, Hohai University, Nanjing, Jiangsu
Province 210098, China. {\tt rangu@hhu.edu.cn}}, \ \ Suping Cui
\footnote{School of Mathematics and Statistis, Qinghai Normal
University, Xining, Qinghai 810008, China. {\tt jiayoucui@163.com}},
\ \ Hengzhe Li \footnote{College of Mathematics and Information
Science, Henan Normal University, Xinxiang 453007, P.R. China. {\tt
lhz@htu.cn}}}
\date{}
\maketitle

\begin{abstract}
For two graphs $G,H$ and a positive integer $k$, the
\emph{Gallai-Ramsey number} $\operatorname{gr}_k(G,H)$ is defined as
the minimum number of vertices $n$ such that any $k$-edge-coloring
of $K_n$ contains either a rainbow (all different colored) copy of
$G$ or a monochromatic copy of $H$. If $G$ and $H$ are both complete
graphs, then we call it Gallai-Ramsey function. Fox and Sudakov
proved $\operatorname{gr}_k(K_s,K_t)\leq s^{4kt}$. Alon et al.
showed that $\operatorname{gr}_k(K_s,K_t)\leq (2s^3+4s^2)^{kt}$. In
this paper, we prove that $\operatorname{gr}_k(K_s,K_t)\leq
2^{kt}s^{3kt}$ for $t\geq 47$. We also give better upper bounds for
$\operatorname{gr}_k(G,H)$ when $G,H$ are some special graphs. In
this paper, we derive some lower bounds for
Gallai-Ramsey functions and numbers by Lov\'{a}sz Local Lemma.\\[2mm]
{\bf Keywords:} Ramsey theory; Gallai-Ramsey function; Gallai-Ramsey number; Lov\'{a}sz Local Lemma.\\[2mm]
{\bf AMS subject classification 2020:} 05D10; 05C80.
\end{abstract}

\section{Research Background}

Ramsey theory, named from Frank P. Ramsey, is a branch of
mathematics that studies the conditions under which order must
appear. Problems in Ramsey theory typically ask a question of the
form: ``how many elements of some structure must there be to
guarantee that a particular property will hold?'' More specifically,
Ron Graham describes Ramsey theory as a ``branch of combinatorics''.
We refer the readers to \cite{GrahamRothschildSpencer} for a
classical book of Ramsey theory.

\subsection{Ramsey theorem}

Let $V(G)$, $E(G)$, $e(G)$, $\delta(G)$ be the vertex set, edge set,
size, minimum degree of graph $G$, respectively. An $r$-coloring is
\emph{exact} if all colors are used at least once. In this work, we
consider only edge-colorings of graphs. A coloring of a graph is
called \emph{rainbow} if no two edges have the same color. Let
$[n]=\{1,2,\ldots,n\}$ and $[n]^2=\{Y:Y\subset \{1,2,\ldots,n\}, \
|Y|=2\}$.

We write $n\rightarrow (\ell_1,\ell_2,\ldots, \ell_r)$ if, for every
$r$-coloring of $[n]^2$, there exists $i$, $1\leq i\leq r$, and a
set $T$, $|T|=\ell_i$ so that $[T]^2$ is colored $i$. The
\emph{Ramsey function} $R(\ell_1,\ell_2,\ldots, \ell_r)$ denotes the
minimal $n$ such that
$$
n\rightarrow (\ell_1,\ell_2,\ldots, \ell_r).
$$

\begin{theorem}{\upshape \textbf{(Ramsey's theorem}) \cite{GrahamRothschildSpencer}}  \label{Ramsey}
The function $R$ is well defined, that is, for all
$\ell_1,\ell_2,\ldots, \ell_r$ there exists $n$ such that
$$
n\rightarrow (\ell_1,\ell_2,\ldots, \ell_r).
$$
\end{theorem}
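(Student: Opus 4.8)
The plan is to prove well-definedness by exhibiting a finite recursive upper bound on $R$, via a double induction. First I would dispose of the trivial base cases: when $r = 1$ we have $R(\ell_1) = \ell_1$, and whenever some $\ell_i = 1$ the value is $1$, since a single vertex is already a monochromatic $K_1$ in color $i$. With these in hand, the strategy is to induct on the number of colors $r$, and within the two-color case to induct on $\ell_1 + \ell_2$.

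For the two-color heart of the argument, I would establish the Erd\H{o}s--Szekeres recurrence
$$R(\ell_1, \ell_2) \le R(\ell_1 - 1, \ell_2) + R(\ell_1, \ell_2 - 1),$$
which immediately yields finiteness of the two-color function by induction on $\ell_1 + \ell_2$, the base cases being $\ell_1 = 1$ or $\ell_2 = 1$. To prove the recurrence, set $n = R(\ell_1 - 1, \ell_2) + R(\ell_1, \ell_2 - 1)$, fix any $2$-coloring of $[n]^2$, and single out one vertex $v$. The remaining $n - 1$ vertices split into the set $A$ joined to $v$ in color $1$ and the set $B$ joined to $v$ in color $2$. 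Since $|A| + |B| = n - 1 = R(\ell_1 - 1, \ell_2) + R(\ell_1, \ell_2 - 1) - 1$, pigeonhole forces $|A| \ge R(\ell_1 - 1, \ell_2)$ or $|B| \ge R(\ell_1, \ell_2 - 1)$. In the first case the induction hypothesis applied to $A$ produces either a color-$2$ copy of $K_{\ell_2}$ (and we are done) or a color-$1$ copy of $K_{\ell_1 - 1}$, which together with $v$ and its color-$1$ edges into $A$ completes a color-$1$ copy of $K_{\ell_1}$; the second case is symmetric.

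For the general reduction from $r$ colors to two, I would merge the last $r - 1$ colors into a single super-color, giving
$$R(\ell_1, \ell_2, \ldots, \ell_r) \le R\big(\ell_1,\, R(\ell_2, \ldots, \ell_r)\big).$$
The inner term is finite by the induction hypothesis on the number of colors, and the outer two-color term is finite by the previous paragraph, so the bound is finite. To justify the inequality, take $n$ equal to the right-hand side and any $r$-coloring; regarding colors $2, \ldots, r$ as one color yields a $2$-coloring, which produces either a color-$1$ copy of $K_{\ell_1}$ (done) or a super-colored clique on $R(\ell_2, \ldots, \ell_r)$ vertices, whose induced $(r-1)$-coloring then supplies a monochromatic $K_{\ell_i}$ for some $i \ge 2$.

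The argument is essentially self-contained, and I do not anticipate a serious obstacle; the one point requiring care is the bookkeeping in the nested induction---stating the base cases of both the two-color recurrence and the color count so that every recursive call terminates, and verifying that the super-colored clique genuinely inherits a valid $(r-1)$-coloring on which the inner Ramsey bound can be invoked.
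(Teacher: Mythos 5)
Your proof is correct and complete: the Erd\H{o}s--Szekeres recurrence for two colors plus the colour-merging reduction $R(\ell_1,\ldots,\ell_r)\le R(\ell_1,R(\ell_2,\ldots,\ell_r))$ is the classical argument, and your handling of the base cases and the pigeonhole step is sound. The paper itself gives no proof---it simply cites Graham--Rothschild--Spencer---and the argument you supply is essentially the one found in that reference, so there is nothing to reconcile.
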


The Ramsey function only consider complete graphs. But later, the
Ramsey number are considered for general graphs. Given $k$ graphs
$H_1,H_2,\ldots,H_k$, let ${\rm R}(H_1,H_2,\ldots,H_k)$ denote the
minimum number of vertices $n$ needed so that every
$k$-edge-coloring of $K_{n}$ contains a monochromatic $H_i$, where
$1\leq i\leq n$. If $H_1=H_2=\cdots=H_k$, then we write the number
as ${\rm R}_k(H)$.

We refer the readers to \cite{MR1670625} for a dynamic survey of
small Ramsey numbers.

\subsection{Gallai-Ramsey number and function}

Colorings of complete graphs that contain no rainbow triangle have
very interesting and somewhat surprising structure. In 1967, Gallai
\cite{MR0221974} first examined this structure under the guise of
transitive orientations. The result was reproven in \cite{MR2063371}
in the terminology of graphs and can also be traced to
\cite{MR1464337}. For the following statement, a trivial partition
is a partition into only one part.

\begin{theorem}[\cite{MR1464337,MR0221974,MR2063371}]\label{Thm:G-Part}
In any coloring of a complete graph containing no rainbow triangle,
there exists a nontrivial partition of the vertices (that is, with
at least two parts) such that there are at most two colors on the
edges between the parts and only one color on the edges between each
pair of parts.
\end{theorem}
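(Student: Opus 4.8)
The plan is to argue by induction on the number of vertices $n$ (equivalently, by taking a minimal counterexample), exploiting the fact that the no-rainbow-triangle hypothesis imposes a rigid \emph{local} structure around every vertex. First I would dispose of the degenerate cases. If the coloring uses at most two colors, the partition into singletons already works, since then there are at most two colors between the parts and exactly one color on each pair of parts. If some vertex $v$ is joined to all of $V\setminus\{v\}$ by a single color, then $\{\{v\},\,V\setminus\{v\}\}$ is the desired partition (two parts, one color between them). Hence I may assume that at least three colors occur and that every vertex meets at least two colors.

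The key step is to fix a vertex $v$ and partition the remaining vertices according to the color of the edge joining them to $v$, say into blocks $N_1,\dots,N_t$ with $t\ge 2$, where $N_i$ collects the vertices joined to $v$ in color $i$. The crucial consequence of the hypothesis is that for $a\in N_i$ and $b\in N_j$ with $i\neq j$, the triangle $v,a,b$ has edges of colors $i$, $j$ and $c(ab)$, so the absence of a rainbow triangle forces $c(ab)\in\{i,j\}$. Thus between any two blocks $N_i,N_j$ only the two colors $i,j$ appear, and between $v$ and $N_i$ only color $i$ appears. This already produces a partition pointing toward the statement; the remaining difficulty is purely about controlling the \emph{total} number of colors used between blocks.

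Conceptually, I would phrase the target partition as a modular decomposition of the coloring: take the partition $\{V_1,\dots,V_m\}$ into maximal homogeneous sets (sets $M$ with the property that every vertex outside $M$ sees all of $M$ in a single color). One checks directly that between any two such blocks exactly one color appears, so the second condition of the theorem is automatic. With this formulation the theorem reduces to a single claim: the \emph{quotient} coloring, obtained by contracting each block to a point, uses at most two colors; equivalently, a Gallai coloring using at least three colors must contain a nontrivial homogeneous set. This reduction to two colors is the heart of the matter and the step I expect to be the main obstacle, since the local two-color property around $v$ does not by itself bound the number of colors globally. To overcome it I would combine that local property with a connectivity analysis of the individual color classes: applying the induction hypothesis inside each block $N_i$, merging blocks joined by a common color, and tracking how the between-block colors propagate, so that surviving in the quotient with three or more colors yields a nontrivial module and hence a contradiction, ultimately forcing the collapse to at most two colors.
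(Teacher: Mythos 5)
The paper does not prove this theorem at all; it is imported verbatim from the literature (Gallai, Gy\'arf\'as--Simonyi, Cameron--Edmonds), so your proposal can only be judged on its own completeness. Your preliminary reductions are fine, and the local observation is correct: fixing $v$ and splitting $V\setminus\{v\}$ into color classes $N_1,\dots,N_t$, the no-rainbow-triangle condition forces every edge between $N_i$ and $N_j$ to carry color $i$ or $j$. Your reformulation via homogeneous sets is also the standard one, and the reduction you state is the right one: the theorem is equivalent to the assertion that a Gallai coloring using at least three colors admits a nontrivial homogeneous set (equivalently, that the quotient coloring uses at most two colors).

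The problem is that this equivalent assertion \emph{is} the theorem, and you do not prove it --- you explicitly flag it as ``the heart of the matter and the step I expect to be the main obstacle'' and then offer only a plan (``connectivity analysis of the individual color classes,'' ``tracking how the between-block colors propagate''). The local two-color property around a single vertex genuinely does not suffice: a priori the pairwise constraints $c(N_i,N_j)\subseteq\{i,j\}$ still permit many colors globally, and turning them into a two-color quotient requires a real argument (in Gy\'arf\'as--Simonyi's treatment this is done by an induction in which one analyzes the connectivity of the union of two color classes and merges components into modules; Cameron--Edmonds derive it from properties of the modular/$\lambda$-decomposition). There is also a smaller technical gap: maximal proper homogeneous sets need not form a partition in general (two maximal modules can overlap), so you would need to work with maximal \emph{strong} modules or construct the partition by hand. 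As written, the proposal is a correct outline of the known proof strategy with its central step missing, so it does not constitute a proof.
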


For ease of notation, we refer to a colored complete graph with no
rainbow triangle as a \emph{Gallai-coloring} and the partition
provided by Theorem~\ref{Thm:G-Part} as a \emph{Gallai-partition}.
The induced subgraph of a Gallai colored complete graph constructed
by selecting a single vertex from each part of a Gallai partition is
called the \emph{reduced graph} of that partition. By
Theorem~\ref{Thm:G-Part}, the reduced graph is a $2$-colored
complete graph.

Although the reduced graph of a Gallai partition uses only two
colors, the original Gallai-colored complete graph could certainly
use more colors. With this in mind, we consider the following
generalization of the Ramsey numbers. Given two graphs $G$ and $H$,
the \emph{$k$-colored Gallai-Ramsey number}
$\operatorname{gr}_k(G:H)$ is defined to be the minimum integer $m$
such that every $k$-edge-coloring of the complete graph on $m$
vertices contains either a rainbow copy of $G$ or a monochromatic
copy of $H$. With the additional restriction of forbidding the
rainbow copy of $G$, it is clear that $\operatorname{gr}_k(G:H)\leq
{\rm R}_k(H)$ for any graph $G$.

In \cite{GSSS10}, Gy{\'a}rf{\'a}s et al. obtained the following nice
result.
\begin{theorem}{\upshape \cite{GSSS10}}
Let $H$ be a fixed graph with no isolated vertices. If $H$ is not
bipartite, then $\operatorname{gr}_{k}(K_{3} : H)$ is exponential in
$k$. If $H$ is bipartite, then $\operatorname{gr}_{k}(K_{3} : H)$ is
linear in $k$.
\end{theorem}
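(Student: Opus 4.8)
The plan is to establish matching upper and lower bounds in each case, after first reducing to canonical targets. Writing $h:=|V(H)|$, we always have $H\subseteq K_h$, and if $H$ is bipartite then $H\subseteq K_{s,s}$ where $s$ is the larger side of a bipartition of $H$. Since ${\rm gr}_k(K_3:H)\le {\rm gr}_k(K_3:K_h)$ and (in the bipartite case) ${\rm gr}_k(K_3:H)\le {\rm gr}_k(K_3:K_{s,s})$, it suffices to prove the upper bounds for these complete and complete-bipartite targets, while for the lower bounds I will exhibit Gallai colorings with no monochromatic $H$ directly, controlled only by $\chi(H)$ (non-bipartite) or by block sizes (bipartite). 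The only structural input is Theorem~\ref{Thm:G-Part}, that every Gallai coloring admits a nontrivial Gallai partition with a $2$-colored reduced graph, together with the elementary fact that in any $2$-coloring of a complete graph one color class is connected and spanning.

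For the non-bipartite case, set $p:=\chi(H)\ge 3$ and $t:=(p-1)^2$, and fix the $2$-coloring of $K_t$ on vertex set $[p-1]\times[p-1]$ that uses color $a$ when two endpoints share a first coordinate and color $b$ otherwise; both color classes are $(p-1)$-colorable. I build $G_j$ by substituting a copy of $G_{j-1}$ (using colors $1,\dots,2j-2$) into each vertex of a fresh copy of this $K_t$ (using the two new colors $2j-1,2j$). Substituting Gallai colorings into the vertices of a $2$-coloring keeps the coloring Gallai, since every triangle meets at most two copies and hence sees at most two colors; and an easy induction shows that every color class of $G_j$ has chromatic number at most $p-1<\chi(H)$, so none can contain $H$. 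Thus $G_j$ is a Gallai $2j$-coloring of $K_{t^j}$ with no monochromatic $H$, giving the exponential lower bound ${\rm gr}_k(K_3:H)\ge (p-1)^{k}$. For the matching upper bound I would induct via the Gallai partition: its reduced $2$-coloring has fewer than $R(K_h,K_h)$ parts, because a monochromatic clique among the parts lifts (one vertex per part) to a monochromatic $K_h\supseteq H$; and a monochromatic $K_h$ is assembled by adjoining to a monochromatic $K_{h-1}$ found recursively inside the largest part a single vertex of a part joined to it in that same color, an induction on $k$ and $h$ yielding a bound of the form $(R(K_h,K_h)-1)^{O(k)}$.

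For the bipartite case, partition the vertices of $K_{k(s-1)}$ into blocks $B_1,\dots,B_k$ of size $s-1$ and color an edge with endpoints in $B_i,B_j$ (with $i\le j$) by color $j$. Every triangle on blocks $i\le j\le l$ uses only colors from $\{j,l\}$, so the coloring is Gallai, while color class $c$ consists of the edges inside $B_c$ together with those between $B_c$ and $B_1\cup\cdots\cup B_{c-1}$; since its ``clique side'' $B_c$ has only $s-1$ vertices, one checks it contains no $K_{s,s}$, so there is no monochromatic $H$. This gives the linear lower bound ${\rm gr}_k(K_3:H)\ge k(s-1)+1$. For the upper bound, in any Gallai partition two parts joined in one color span a monochromatic complete bipartite graph, which contains $K_{s,s}$ as soon as both parts have at least $s$ vertices; hence at most one part is large (of size $\ge s$), and a monochromatic clique on $\ge 2s$ parts likewise yields $K_{s,s}$, so by Ramsey's theorem the number of parts stays below $M:=R(2s,2s)$. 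I then peel off the at most $M-1$ small parts (at most $(M-1)(s-1)$ vertices) and recurse into the unique large part.

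The crux of the bipartite upper bound is bounding the depth of this recursion, and the key observation is an accumulation argument: when a color $c$ is the connected spanning color of the reduced graph at some level, the large part is joined entirely in color $c$ to a nonempty set of vertices, and because the surviving large parts are nested, these color-$c$ neighborhoods are disjoint yet all complete in color $c$ to the final large part; as soon as $s$ of them appear they span a monochromatic $K_{s,s}$, so each color can act as the spanning color at most $s-1$ times, the recursion has depth at most $(s-1)k$, and the total is $O(k)$. The genuinely hard step, which I expect to be the main obstacle, is the non-bipartite upper bound: there, edges between parts form complete bipartite graphs that never contain $K_h$, so a monochromatic $K_h$ must be built up across several parts, and the palette inside the largest part need not shrink; making the simultaneous induction on clique size and number of colors work, so that the largest part only has to supply a $K_{h-1}$, is the delicate part of the argument.
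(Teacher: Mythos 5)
The paper does not prove this statement; it is quoted verbatim from \cite{GSSS10}, so there is no in-paper argument to compare against, and your proposal has to stand on its own. Two of your four pieces do stand: the non-bipartite lower bound via iterated substitution into the $2$-colouring of $K_{(p-1)^2}$ (both colour classes $(p-1)$-chromatic, substitution preserves the Gallai property and the bound on the chromatic number of each colour class) is correct and gives $(\chi(H)-1)^{k}$; and the bipartite upper bound via the Gallai partition, the ``at most one part of size $\geq s$'' observation, and the accumulation argument bounding the recursion depth by $(s-1)k$ is a sound route to a linear bound. But there are two genuine gaps. First, and most seriously, you concede the non-bipartite \emph{upper} bound yourself: ``exponential in $k$'' requires $\mathrm{gr}_k(K_3:H)\leq C_H^{\,k}$, and this does not follow from the trivial bound $\mathrm{gr}_k(K_3:H)\leq R_k(K_h)$ (multicolour Ramsey numbers grow like $k^{O(k)}$, not $C^k$). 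Your sketch --- recurse into the largest part and adjoin one vertex from a neighbouring part --- does not close, because the recursion into the largest part reduces neither $k$ nor the target clique size: all $k$ colours may survive inside the largest part, and nothing forces the adjoined vertex's colour to match the colour of the clique found recursively. Some additional idea (e.g.\ a weighting/product argument over colours, or a step that genuinely eliminates a colour) is needed here, and without it the non-bipartite half of the theorem is not proved.

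Second, your bipartite lower bound contains a containment error: from ``colour class $c$ contains no $K_{s,s}$'' you conclude ``colour class $c$ contains no $H$,'' but $H\subseteq K_{s,s}$ gives the implication in the opposite direction. In your construction the colour class $c$ is exactly the set of edges meeting $B_c$, i.e.\ a graph with a vertex cover of size $s-1$; by K\H{o}nig's theorem this excludes $H$ only when $H$ has matching number at least $s$. For $H=K_{1,m}$ (larger side $s=m$) the colour classes of your colouring do contain $H$, and indeed $\mathrm{gr}_k(K_3:K_{1,2})=3$ for all $k$, so the claimed bound $k(s-1)+1$ is simply false for stars. This error is not fatal to the theorem --- read as an $O(k)$ upper bound the bipartite case needs no matching lower bound --- but the argument as written is wrong and the claimed inequality does not hold for all bipartite $H$ without isolated vertices.
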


We refer the interested reader to \cite{MagnantNowbandegani} for a
book and \cite{ChenLiPei, FMO14, FoxSudakov2008, FujitaMagnant,
GSSS10, LiWang, LiBesseMagnantWangWatts, MaoWangMaganantSchiermeyer,
WangMaoZouMaganant, ZhaoWei} for recent papers on Gallai-Ramsey
numbers.

We write $n\overset{\operatorname{gr}_{k}}{\longrightarrow} (s,t)$
if, for every $k$-coloring of $[n]^2$, there exists a set $S$ such
that $[S]^2$ is rainbow or there exists a set $T$ so that $[T]^2$ is
monochromatic. If $|S|=s$ and $|T|=t$, then the \emph{Gallai-Ramsey
function} $\operatorname{gr}_k(s,t,t,\ldots,t)$ ($k$ times of $t$)
or $\operatorname{gr}_k(s,t)$ denotes the minimal $n$ such that
$$
n\overset{\operatorname{gr}_k}{\longrightarrow} (s,t).
$$
Note that $\operatorname{gr}_k(s,t)=\operatorname{gr}_k(K_s:K_t)$.
If $k=2$, then
$\operatorname{gr}_2(s,t)=\operatorname{gr}_2(K_s:K_t)=\operatorname{r}(s,t)$.

Since $\operatorname{gr}_k(G:H)\leq {\rm R}_k(H)$ for any graph $G$,
it follows that the following corollary is immediate from Theorem
\ref{Ramsey}.
\begin{corollary}
The function $\operatorname{gr}_k$ is well defined, that is, for all
$s,t$ there exists $n$ such that
$$
n\overset{\operatorname{gr}_k}{\longrightarrow} (s,t).
$$
\end{corollary}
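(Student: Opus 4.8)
The plan is to reduce the statement to the classical Ramsey theorem (Theorem~\ref{Ramsey}) through the monochromatic alternative alone, so that the rainbow condition never has to be invoked. The guiding observation, already recorded above, is the inequality ${\rm GR}_k(s,t)={\rm gr}_k(K_s:K_t)\leq {\rm R}_k(K_t)$: forbidding a rainbow $K_s$ only weakens the forcing condition, so any $n$ that guarantees a monochromatic $K_t$ outright certainly guarantees ``rainbow $K_s$ or monochromatic $K_t$''. Thus it suffices to show that ${\rm R}_k(K_t)$ is finite, and then take $n$ to be that value.

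Concretely, I would first apply Theorem~\ref{Ramsey} with the choice $\ell_1=\ell_2=\cdots=\ell_k=t$. This yields a finite integer $n_0$ with $n_0\rightarrow(t,t,\ldots,t)$, and by definition $n_0$ may be taken to equal ${\rm R}_k(K_t)$. Next, fix any $n\geq n_0$ and consider an arbitrary $k$-coloring of $[n]^2$. By the choice of $n_0$ there exists a set $T$ with $|T|=t$ such that $[T]^2$ is monochromatic in some color $i$. This $T$ is precisely a witness for the second disjunct in the definition of $n\overset{{\rm GR}_k}{\longrightarrow}(s,t)$, so the arrow holds for this coloring; since the coloring was arbitrary, $n\overset{{\rm GR}_k}{\longrightarrow}(s,t)$ follows for every value of $s$.

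The argument carries no real obstacle: the only substantive input is the finiteness of the ordinary Ramsey number ${\rm R}_k(K_t)$ supplied by Theorem~\ref{Ramsey}, and the rainbow option in the definition of ${\rm GR}_k$ is simply discarded. The point worth flagging is that this route yields only the crude upper bound ${\rm GR}_k(s,t)\leq {\rm R}_k(K_t)$; extracting the sharper \emph{lower} bounds that motivate the paper will instead require a probabilistic argument via the Lov\'{a}sz Local Lemma, which is a separate matter from the mere well-definedness established here.
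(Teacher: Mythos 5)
Your proposal is correct and follows exactly the paper's own route: the paper derives the corollary from the observation that ${\rm gr}_k(G:H)\leq {\rm R}_k(H)$, so finiteness of ${\rm R}_k(K_t)$ from Theorem~\ref{Ramsey} immediately gives well-definedness of ${\rm GR}_k$. Your write-up merely spells out the details that the paper leaves as ``immediate.''
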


\subsection{Main results}

In 1981, Erd\"{o}s \cite{Erdos} proposed studying the following
generalization of the classical Ramsey problem. Let $p,q$ be
positive integers with $2\leq q\leq {p\choose 2}$. A
\emph{$(p,q)$-coloring} of $K_n$ is an edge-coloring such that every
copy of $K_p$ receives at least $q$ distinct colors. Let $f(n,p,q)$
be the minimum number of colors in a $(p,q)$-coloring of $K_n$.
Determining the numbers $f(n,p,2)$ is equivalent to determining the
multicolor Ramsey numbers $\operatorname{R}_k(K_p)$, as an
edge-coloring is a $(p,2)$-coloring if and only if it does not
contain a monochromatic $K_p$. Let $g(k)$ be the largest positive
integer $n$ for which there is a $k$-edge-coloring of $K_n$, in
which every $K_4$ receives at least three colors, i.e., for which
$f(n, 4, 3) \leq k$. Restated, $g(k) + 1$ is the smallest positive
integer $n$ for which every $k$-edge-coloring of the edges of $K_n$
contains a $K_4$ that receives at most two colors.

Fox and Sudakov \cite{FoxSudakov2008} obtained the first exponential
upper bound for this problem.
\begin{theorem} {\upshape \cite{FoxSudakov2008}}\label{th-FoxSudakov2008}
For $k > 2^{100}$, we have $g(k) < 2^{2000k}$.
\end{theorem}

They got the following upper bound.
\begin{theorem} {\upshape \cite{FoxSudakov2008}}
$\operatorname{gr}_k(K_s,K_t)\leq s^{4kt}$.
\end{theorem}

Given a positive integer $m$, we define an edge-coloring of a (host)
graph to be \emph{$m$-good} if each color appears at most $m$ times
at each vertex. Given any graph $H$, let $g(m, H)$ denote the
smallest $n$ such that every $m$-good coloring of $E(K_n)$ yields a
rainbow copy of $H$.

Alon et al. \cite{AlonJiangMillerPritikin} derived the following
result.
\begin{theorem}{\upshape \cite{AlonJiangMillerPritikin}}\label{th1-7}
For all positive integers $m,s$ we have $g(m,K_s)\leq 2ms^3+4ms^2$.
\end{theorem}

\begin{corollary}\label{cor1-8}
$\operatorname{gr}_k(K_s,K_t)\leq (2s^3+4s^2)^{kt}$.
\end{corollary}

In Section $2$, we get the following result by the idea in
\cite{AlonJiangMillerPritikin, FoxSudakov2008}.
\begin{theorem}\label{th-main}
Let $r,s,t$ be three integers with $r\leq 2(s-r)^3-{s-r\choose 2}$
and $t\geq 47$. Let $G_i \ (1\leq i\leq a)$ be a graph obtained from
the following steps.
\begin{itemize}
\item[] $(1)$ $G_1$ is a graph of order $s$;

\item[] $(2)$ $G_{i}$ is a graph obtained
from $G_{i-1}$ by deleting all leaves;

\item[] $(3)$ There exists an integer $a$ such that $G_{a}$ is a complete graph.
\end{itemize}
If the total number of deleted edges is $r$, then
$$
\operatorname{gr}_k(G_1,K_t)\leq 2^{kt}(s-r)^{3kt}.
$$
\end{theorem}

The following corollary is immediate.
\begin{corollary}\label{cor1-9}
For $t\geq 47$, $\operatorname{gr}_k(K_s,K_t)\leq 2^{kt}s^{3kt}$.
\end{corollary}

The following result is also due to Alon et al.
\cite{AlonJiangMillerPritikin}.
\begin{theorem}{\upshape \cite{AlonJiangMillerPritikin}}\label{th-Alon}
For any positive integer $s$ sufficiently large, there exists an
absolute constant $c$ such that, for all admissible integers $m$ and
$s$, we have $g(m, K_s)<\frac{cms^3}{\ln s}$.
\end{theorem}

We can give a better upper bound by Theorem \ref{th-Alon}.
\begin{corollary}\label{cor1-11}
For any positive integer $s$ sufficiently large, there exists an
absolute constant $c$ such that, for all admissible integers $m$ and
$s$, we have $\operatorname{gr}_k(K_s,K_t)\leq (\frac{cs^3}{\ln
s})^{kt}$.
\end{corollary}

\begin{theorem}{\upshape \cite{AlonJiangMillerPritikin}}
Let $G$ be a graph with $s$ vertices and maximum degree $d$. For all
positive integers $m$, we have $g(m, G)\leq 2md^2s+32md^4+4s$.
\end{theorem}

\begin{theorem}\label{th-F}
Let $s,t$ be two integers, and let $F_i \ (1\leq i\leq a)$ be a
graph obtained from the following steps.
\begin{itemize}
\item[] $(1)$ $F_1$ is a graph of order $s$;

\item[] $(2)$ $F_{i}$ is a graph obtained
from $F_{i-1}$ by deleting all leaves;

\item[] $(3)$ There exists an integer $a$ such that $F_{a}$ has no leaves.
\end{itemize}
If $\Delta(F_{a})=d\geq 2$, and $e(F_1)=b$ , $b\leq
2d^2(s-r)+32d^4$, and the total number of deleted edges is $r$, then
$$
\operatorname{gr}_k(F_1,K_t)\leq
(2d^2(s-r)+32d^4)^{kt}+4(s-r)\frac{(2d^2(s-r)+32d^4)^{kt}-1}{2d^2(s-r)+32d^4-1}.
$$
\end{theorem}

\begin{theorem}\label{th-tree}
Let $s,t$ be two integers, and let $T$ be a tree of order $s$. Then
$$
\operatorname{gr}_k(T,K_t)\leq (s-1)^{kt}.
$$
\end{theorem}

The probabilistic method is a powerful technique for approaching
asymptotic combinatorial problems. The following probability result,
due to L. Lov\'{o}sz, fundamentally improves the Existence argument
in many instances.

Let $A_1,\ldots,A_n$ be events in a probability space $\Omega$. We
say that the graph $\Gamma$ with vertex set $\{1,2,\ldots,n\}$ is a
\emph{dependency graph} $\{A_1,A_2,\ldots,A_n\}$ if:
\begin{center}
$\{i\}$ not joined to $j_1,j_2,\ldots,j_s$ $\Longrightarrow $ $A_i$
and $A_{j_1}\cap A_{j_2}\cap \cdots \cap A_{j_s}$ are independent.
\end{center}

\begin{theorem} {\upshape (\textbf{Lov\'{a}sz Local Lemma} \cite{ErdosLovasz})}\label{th-LLL}
Let $A_1,\ldots,A_n$ be events in a probability space $\Omega$ with
dependence graph $\Gamma$. Suppose that there exists
$x_1,\ldots,x_n$ with $0<x_i\leq 1$ such that
$$
\Pr[A_i]<(1-x_i)\prod_{\{i,j\}\in \Gamma}x_j, \ 1\leq i\leq n.
$$
Then $\Pr[\bigwedge_{i} \overline{A_i}]>0$.
\end{theorem}

A slightly more convenient form of the local lemma results from the
following observation. Set
$$
y_i=\frac{1-x_i}{x\Pr[A_i]},
$$
so that
$$
x_i=\frac{1}{1+y_i\Pr[A_i]}.
$$

Since $1+z\leq \exp(z)$, we have:

\begin{corollary}{\upshape \cite{GrahamRothschildSpencer}}\label{Lemma-LLL}
Suppose that $A_1,\ldots,A_n$ are events in a probability space
having dependence graph $\Omega$, and there exist positive
$y_1,y_2,\ldots,y_n$ satisfying
$$
\ln y_i>\sum_{\{i,j\}\in \Gamma}y_j\Pr[A_j]+y_i\Pr[A_i],
$$
for $1\leq i\leq n$. Then $\Pr[\bigwedge \overline{A_i}]>0$.
\end{corollary}

In \cite{Spencer}, Spencer studied the some asymptotic lower bounds
for Ramsey functions. Li et al. \cite{LiRousseauZang} investigated
the asymptotic upper bounds for Ramsey functions. Chen et al.
\cite{ChenHattinghRousseau} got the asymptotic bounds for
irredundant and mixed Ramsey numbers. Caro et al.
\cite{CaroLiRousseauZhang} obtained the asymptotic bounds for some
bipartite graphs. In \cite{GodboleSkipperSunley}, Godbole et al.
studied the asymptotic lower bound on the diagonal Ramsey numbers.
Erd\"{o}s and Hattingh \cite{ErdosHattingh} investigated the
asymptotic bounds for irredundant Ramsey numbers.

In Subsection 2.1, we obtain a lower bound of Gallai-Ramsey function
for a fixed probability of receiving colors for each edge. By
Lov\'{o}sz Local Lemma, we derive another lower bound of
Gallai-Ramsey function for a flexible probability of receiving
colors for each edge. In Subsection 2.2, we got some lower bounds
for Gallai-Ramsey numbers by the same method.

\section{Upper bounds}

For an edge-coloring of $K_n$, a vertex $x$, and a color $i$, let
$d_i(x)$ denote the degree of vertex $x$ in color $i$. Our first
lemma shows that if, for every vertex $x$ and color $i$, $d_i(x)$ is
not too large, then the coloring contains many rainbow cliques.

\begin{lemma}{\upshape \cite{FoxSudakov2008}}\label{lem-FoxSudakov2008}
If an edge-coloring of the complete graph $K_n$ satisfies
$d_i(x)\leq \delta n$ for each $x\in V(K_n)$ and each color $i$,
then this coloring has at most $\frac{5}{8}\delta t^4{n\choose t}$
non-rainbow copies of $K_t$.
\end{lemma}

\begin{lemma}\label{lem-FoxSudakov2008}\label{lem2-2}
Let $t$ be an integer with $t\geq 47$. If an edge-coloring of the
complete graph $K_n$ satisfies $d_i(x)\leq \frac{n}{2t^3}$ for each
$x\in V(K_n)$ and each color $i$, then there is a rainbow copy of
$K_t$.
\end{lemma}
\begin{proof}
If a $K_t$ is not rainbow, then it has two adjacent edges of the
same color or two nonadjacent edges of the same color. Let
$\nu(i,t,n)$ be the number of copies of $K_t$ in $K_n$ in which
there are at least two adjacent edges of color $i$. To bound the
number of such $K_t$ we can first choose the vertex, then the two
edges with color $i$ incident to this vertex and then the remaining
$t-3$ vertices. Hence, the number of $K_t$'s for which there is a
vertex with degree at least two in some color is at most
\begin{eqnarray*}
\sum_{i}\nu(i,t,n)&\leq&\sum_{i}\sum_{x\in V}{d_i(x)\choose 2}{n-3\choose t-3}\leq 2nt^3{\frac{n}{2t^3}\choose 2}{n-3\choose t-3}\\[0.2cm]
&<& \frac{n^3}{4t^3}\left(\frac{t}{n}\right)^3{n\choose
t}=\frac{1}{4}{n\choose t}.
\end{eqnarray*}

Let $\psi(i,t,n)$ be the number of copies of $K_t$ in $K_n$ in which
there is a matching of size at least two in color $i$. Let $x$ be
the number of two edges of a matching with the same color. Then the
number of $K_t$'s in which there is a matching of size at least two
in some color is at most
\begin{eqnarray*}
\sum_{i}\psi(i,t,n)&\leq& x{n-4\choose t-4}-\sum^{\lfloor \frac{t}{4}\rfloor}_{i=2}{x\choose i}{n-4i\choose t-4i}\\[0.2cm]
&\leq& x{n-4\choose t-4}-{x\choose 2}{n-8\choose t-8}\\[0.2cm]
&\leq& \frac{{n-4\choose t-4}}{{n-8\choose t-8}}{n-4\choose t-4}-{\frac{{n-4\choose t-4}}{{n-8\choose t-8}}\choose 2}{n-8\choose t-8}\\[0.2cm]
&=&
\frac{1}{2}\left(\frac{(n-4)(n-5)(n-6)(n-7)}{(t-4)(t-5)(t-6)(t-7)}+1\right){n-4\choose
t-4}\leq \frac{3}{4}{n\choose t}.
\end{eqnarray*}

Hence, the number of $K_t$'s which are not rainbow is at most
$\frac{1}{4}{n\choose t}+\frac{3}{4}{n\choose t}-1={n\choose t}-1$,
completing the proof.
\end{proof}

The following idea for the upper bound is from
\cite{FoxSudakov2008}.
\begin{lemma}\label{lem2-3}
Let $r,s,t$ be three integers with $r\leq 2(s-r)^3-{s-r\choose 2}$
and $t\geq 47$. Let $G_i \ (1\leq i\leq a)$ be a graph obtained from
the following steps.
\begin{itemize}
\item[] $(1)$ $G_1$ is a graph of order $s$;

\item[] $(2)$ $G_{i}$ is a graph obtained
from $G_{i-1}$ by deleting all leaves;

\item[] $(3)$ There exists an integer $a$ such that $G_{a}$ is a complete graph.
\end{itemize}
If the total number of deleted edges is $r$, and an edge-coloring of
$K_n$ satisfies $d_i(x)\leq \frac{n}{2(s-r)^3}$ for each $x\in
V(K_n)$ and each color $i$, then there is a rainbow copy of $G_1$ in
$K_n$.
\end{lemma}

\begin{proof}
By Lemma \ref{lem2-2}, for every edge-coloring of $K_n$, there
exists a rainbow $K_{s-r}$. Since $d_i(x)\leq \frac{n}{2(s-r)^3}$
for each $x\in V(K_n)$ and each color $i$, it follows that for any
vertex $u\in K_{s-r}$, the number of different colors of $u$ at
least $2(s-r)^3$. So there is a rainbow copy of $G$.
\end{proof}

Let $M_s(t_1,\ldots,t_k)$ be the maximum $n$ such that there is a
$k$-edge-coloring of $K_n$ with colors $\{1,\ldots,k\}$ without a
rainbow $G$ defined in Lemma \ref{lem2-3} and without a
monochromatic $K_{t_i}$ in color $i$ for $1\leq i\leq k$.

\begin{lemma}\label{lem2-4}
$$
M_s(t_1,\ldots,t_k)\leq 2(s-r)^3 \max_{1\leq i\leq k}
M_s(t_1,\ldots,t_{i-1},\ldots,t_k).
$$
\end{lemma}
\begin{proof}
By Lemma \ref{lem2-3}, for every edge-coloring of $K_n$ without a
rainbow $G$, there is a vertex $v$ with degree at least $n/2(s-r)^3$
in some color $i$. If the coloring of $K_n$ does not contain a
monochromatic $K_{t_i}$ in color $i$, then the neighborhood of $v$
in color $i$ has at least $n/2(s-r)^3$ vertices and does not contain
$K_{t_i}$ in color $i$, completing the proof.
\end{proof}

\noindent \textbf{Proof of Corollary \ref{cor1-8}:} Let
$m=\frac{n}{2s^3+4s^2}$ in Theorem \ref{th1-7} and apply Lemma
\ref{lem2-4}, the result is immediate.

\noindent \textbf{Proof of Theorem \ref{th-main}:} From Lemmas
\ref{lem2-2}, \ref{lem2-3} and \ref{lem2-4}, the result follows.

\noindent \textbf{Proof of Corollary \ref{cor1-11}:} Let
$m=\frac{n\ln s}{cs^3}$ in Theorem \ref{th1-7} and apply Lemma
\ref{lem2-4}, the result follows.

\begin{lemma}\label{lem2-5}
Let $r,s,t$ be three integers. Let $F_i \ (1\leq i\leq a)$ be a
graph obtained from the following steps.
\begin{itemize}
\item[] $(1)$ $F_1$ is a graph of order $s$;

\item[] $(2)$ $F_{i}$ is a graph obtained
from $F_{i-1}$ by deleting all leaves;

\item[] $(3)$ There exists an integer $a$ such that $F_{a}$ has no leaves.
\end{itemize}
Let $\Delta(F_{a})=d\geq 2$, and $e(F_1)=b$ , $b\leq
2d^2(s-r)+32d^4$, and the total number of deleted edges is $r$. If
an edge-coloring of $K_n$ satisfies $d_i(x)\leq
\frac{n-4(s-r)}{2d^{2}(s-r)+32d^4}$ for each $x\in V(K_n)$ and each
color $i$, then there is a rainbow copy of $F_1$ in $K_n$.
\end{lemma}
\begin{proof}
Let $m=\frac{n-4(s-r)}{2d^{2}(s-r)+32d^4}$ in Theorem \ref{th1-7}, we have 
$g(\frac{n-4(s-r)}{2d^{2}(s-r)+32d^4}, G)\leq n$. For every
edge-coloring of $K_n$, there exists a rainbow copy of $F_{a}$.
Since $d_i(x)\leq \frac{n-4(s-r)}{2d^{2}(s-r)+32d^4}$ for each $x\in
V(K_n)$ and each color $i$, it follows that for any vertex $u\in
F_{a}$, the number of different colors of $u$ at least
$2d^2(s-r)+32d^4$. So there is a rainbow copy of $F_{1}$.
\end{proof}

\noindent \textbf{Proof of Theorem \ref{th-F}:} From Lemmas
\ref{lem2-4} and \ref{lem2-5}, the result follows.

\begin{lemma}\label{lem2-6}
let $T$ be a tree of order $s$. If an edge-coloring of the $T$
satisfies $d_i(x)\leq \frac{n}{s-1}$ for each $x\in V(T)$ and each
color $i$, then there is a rainbow copy of $T$.
\end{lemma}

\begin{proof}
For any vertex $u\in T$, the number of different colors of $u$ is at
least $s-1$, and so there is a rainbow copy of $T$.
\end{proof}

\noindent \textbf{Proof of Theorem \ref{th-tree}:} From Lemmas
\ref{lem2-4} and \ref{lem2-6}, the result follows.

\section{Lower bounds}

\subsection{Results for Gallai-Ramsey function}

For a fixed probability of receiving colors for each edge, we can
derive the following lower bound of $\operatorname{gr}_k(s,t)$.
\begin{theorem}\label{th-R(k,k)}
Let $s,t$ be two positive integers with $r,s\geq 3$. For $k\geq
{s\choose 2}$, we have
$$
\operatorname{gr}_k(K_s,K_t)>\frac{1}{e}\cdot
\min\left\{\frac{s}{\sqrt[s]{L+k^{1-{t\choose
2}}}},\frac{t}{\sqrt[t]{L+k^{1-{t\choose 2}}}}\right\},
$$

where $L=\left(\frac{e}{{s\choose 2}}\right)^{{s\choose 2}}{s\choose
2}!$.
\end{theorem}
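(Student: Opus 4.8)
The plan is to prove the bound by the first--moment (union bound) method applied to a single uniform random colouring, which matches the phrase ``fixed probability of receiving colors for each edge.'' First I would colour each edge of $K_n$ independently and uniformly with one of the $k$ colours, so that each edge receives a prescribed colour with probability $1/k$. For every $s$--set $S$ I introduce the bad event $A_S=\{[S]^2\text{ is rainbow}\}$ and for every $t$--set $T$ the bad event $B_T=\{[T]^2\text{ is monochromatic}\}$; a colouring avoiding all $A_S$ and all $B_T$ is exactly a $k$--colouring of $K_n$ with no rainbow $K_s$ and no monochromatic $K_t$, which witnesses ${\rm GR}_k(s,t)>n$. Hence it suffices to produce one $n$, as large as the claimed bound, for which such a colouring exists with positive probability.

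The two probabilities are read off directly from the uniform model. A fixed $t$--set is monochromatic iff all $\binom t2$ of its edges share a common colour, giving $\Pr[B_T]=k\cdot k^{-\binom t2}=k^{1-\binom t2}$, which is precisely the term appearing under both radicals. A fixed $s$--set is rainbow iff its $\binom s2$ edges all receive distinct colours, which happens with probability $L:=k(k-1)\cdots\bigl(k-\binom s2+1\bigr)/k^{\binom s2}$; this is where the hypothesis $k\ge\binom s2$ enters, since otherwise $L=0$ and a rainbow $K_s$ cannot occur. Writing $X$ for the total number of bad events, linearity of expectation gives $\mathbb E[X]=\binom ns L+\binom nt k^{1-\binom t2}$, and it is enough to force $\mathbb E[X]<1$.

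To reach the stated closed form I would bound both event probabilities by the single quantity $P:=L+k^{1-\binom t2}$ and use the elementary inequality $\binom ns L+\binom nt k^{1-\binom t2}\le \max\{\binom ns,\binom nt\}\,P$. Combining this with $\binom na\le (en/a)^a$ for $a\in\{s,t\}$, the condition $\max\{\binom ns,\binom nt\}P<1$ is implied by requiring both $(en/s)^sP<1$ and $(en/t)^tP<1$; solving each for $n$ yields $n<\tfrac1e\,s\,P^{-1/s}$ and $n<\tfrac1e\,t\,P^{-1/t}$ respectively, and imposing both gives exactly $n<\tfrac1e\min\{s/\sqrt[s]{P},\,t/\sqrt[t]{P}\}$. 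For any such $n$ we then have $\mathbb E[X]<1$, so some outcome satisfies $X=0$, producing the desired colouring and hence ${\rm GR}_k(s,t)>n$; letting $n$ approach the bound completes the argument.

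I expect the main obstacle to be the simultaneous control of the two differently sized forbidden structures, namely arranging the estimate so that both families contribute the \emph{same} radicand $L+k^{1-\binom t2}$ rather than two separate expressions. The key (and slightly lossy) step is replacing each probability by the common upper bound $P$, which is exactly what lets the two constraints collapse into a single minimum; one should verify that this loss does not spoil the leading asymptotics and that the strict-inequality bookkeeping at the last step is handled correctly, so that the strict conclusion ${\rm GR}_k(s,t)>\tfrac1e\min\{\cdots\}$ is justified.
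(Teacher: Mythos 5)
Your proposal is correct and follows essentially the same route as the paper: a uniform random $k$-colouring, a first-moment/union bound over the events $A_S$ and $B_T$, the estimate $\binom{n}{a}\le(en/a)^a$, and the collapse of both terms into the common radicand $L+k^{1-\binom{t}{2}}$. The only divergence is the reading of the undefined constant $L$: you take it to be the exact rainbow probability $k(k-1)\cdots\bigl(k-\binom{s}{2}+1\bigr)k^{-\binom{s}{2}}$, whereas the paper uses the $k$-free upper bound $\bigl(e/\binom{s}{2}\bigr)^{\binom{s}{2}}\binom{s}{2}!$ obtained from its chain of inequalities; since your $L$ is smaller, your argument proves a formally stronger (and still valid) version of the same bound.
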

\begin{proof}
More precisely, we show that if
$$
\max\left\{\left(\frac{ne}{s}\right)^s\left(L+k^{1-{t\choose
2}}\right),\left(\frac{ne}{t}\right)^t\left(L+k^{1-{t\choose
2}}\right)\right\}<1,
$$
then $\operatorname{gr}_k(s,t)>n$, that is, there exists a
$k$-coloring of $K_n$ with vertex set $\{u_1,u_2,\ldots,u_n\}$
containing neither a rainbow $K_s$ nor a monochromatic $K_t$.
Consider a random $k$-coloring of $K_n$, where the color of each
edge is determined by the toss of a fair coin. More precisely, we
have a probability space whose elements are the $k$-colorings of
$K_n$, and whose probabilities are determined by setting
\begin{equation}          \label{eq1-1}
\Pr[\{u_i,u_j\} \ is \ c_x]=k^{-1},
\end{equation}
where $u_i,u_j\in V(K_n)$, $c_1,c_2,\ldots,c_k$ are the all colors
and $1\leq x\leq k$, for all $i,j$ and making these probabilities
mutually independent.

Thus there are $k^{{n\choose 2}}$ colorings, each with probability
$k^{-{n\choose 2}}$. For any set of vertices $S$, $|S|=s$, let $A_S$
denote the event ``$S$ is rainbow.'' Then
$$
\Pr[A_S]=\frac{k(k-1)(k-2)\cdots \left(k-{s\choose
2}+1\right)}{k^{{s\choose 2}}}.
$$
For any set of vertices $T$, $|T|=t$, let $B_T$ denote the event
``$T$ is monochromatic.'' Then
$$
\Pr[B_T]=k^{1-{t\choose 2}},
$$
as the ${t\choose 2}$ ``coin flips'' to determine the colors of
$[S]^2$ must be the same.

The event ``some $s$-element set of vertices $S$ is rainbow'' is
represented by $\bigvee_{|S|=s}A_S$, and ``some $t$-element set of
vertices $T$ is monochromatic'' is represented by
$\bigvee_{|T|=t}B_T$. Then
\begin{eqnarray*}
\Pr\left[\left(\bigvee_{|S|=s}A_S\right)\bigvee
\left(\bigvee_{|T|=t}B_T\right)\right]
&\leq&\Pr\left[\left(\bigvee_{|S|=s}A_S\right)\right]+\Pr\left[\left(\bigvee_{|T|=t}B_T\right)\right]\\[0.2cm]
&\leq &\sum_{|S|=s}\Pr[A_S]+\sum_{|T|=t}\Pr[B_T]\\[0.2cm]
&=&{n\choose s}\frac{k(k-1)(k-2)\cdots \left(k-{s\choose
2}+1\right)}{k^{{s\choose 2}}}+{n\choose t}k^{1-{t\choose 2}}\\[0.2cm]
&<&{n\choose s}\binom{k}{{s\choose 2}}\frac{{s\choose
2}!}{k^{{s\choose 2}}}+{n\choose t}k^{1-{t\choose 2}}\\[0.2cm]
&<&{n\choose s}\left(\frac{ke}{{s\choose 2}}\right)^{{s\choose
2}}\frac{{s\choose 2}!}{k^{{s\choose 2}}}+{n\choose t}k^{1-{t\choose
2}}\\[0.2cm]
&<&{n\choose s}\left(\frac{e}{{s\choose 2}}\right)^{{s\choose
2}}{s\choose 2}!+{n\choose t}k^{1-{t\choose 2}}\\[0.2cm]
&=&{n\choose s}L+{n\choose t}k^{1-{t\choose 2}}.
\end{eqnarray*}
Let $N={n\choose s}L+{n\choose t}k^{1-{t\choose 2}}$. If $s\leq t$,
then
$$
N<{n\choose t}\left(L+k^{1-{t\choose 2}}\right)\leq
\left(\frac{ne}{t}\right)^t\left(L+k^{1-{t\choose 2}}\right), \ {\rm
and} \ {\rm hence} \ n<\frac{t}{e\sqrt[t]{L+k^{1-{t\choose 2}}}}.
$$
If $s>t$, then
$$
N<{n\choose s}\left(L+k^{1-{t\choose 2}}\right)\leq
\left(\frac{ne}{s}\right)^s\left(L+k^{1-{t\choose 2}}\right), \ {\rm
and} \ {\rm hence} \ n<\frac{s}{e\sqrt[s]{L+k^{1-{t\choose 2}}}}.
$$
From the above argument, we have
$$
\operatorname{gr}_k(s,t)>\frac{1}{e}\cdot
\min\left\{\frac{s}{\sqrt[s]{L+k^{1-{t\choose
2}}}},\frac{t}{\sqrt[t]{L+k^{1-{t\choose 2}}}}\right\}.
$$
\end{proof}

For a flexible probability of receiving colors for each edge, we
have the following two results.
\begin{theorem}\label{th2-2}
If, for some $p_1,p_2,\ldots,p_k$, $0\leq p_i\leq 1 \ (1\leq i\leq
k)$,
$$
{n\choose
s}\left[\left(\frac{s(s-1)}{2}\right)!\right]\sum_{x_1,x_2,\ldots,x_{{s
\choose 2}}\subseteq \{1,2,\ldots,k\}}p_{x_1}p_{x_2}\cdots p_{{s
\choose 2}}+{n\choose t}\sum_{x_i\in
\{1,2,\ldots,k\}}p_{x_i}^{{t\choose 2}}<1,
$$
then $\operatorname{gr}_k(s,t)>n$, where $k\geq {s \choose 2}$ and
$c_1,c_2,\ldots,c_k$ are all $k$ colors.
\end{theorem}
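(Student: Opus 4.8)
The plan is to reproduce the first-moment argument from the proof of Theorem~\ref{th-R(k,k)}, but with the uniform color probability $k^{-1}$ replaced by the prescribed weights $p_1,\ldots,p_k$. First I would fix the vertex set $\{u_1,\ldots,u_n\}$ and consider the random $k$-coloring of $K_n$ in which each of the $\binom{n}{2}$ edges independently receives color $c_x$ with probability $p_x$, i.e.
$$
\Pr[\{u_i,u_j\} \text{ is } c_x] = p_x, \qquad 1 \leq x \leq k,
$$
these choices being mutually independent (so that $\sum_{x=1}^{k} p_x = 1$ is understood). The aim is to show that, under the stated hypothesis, the expected number of rainbow copies of $K_s$ plus the expected number of monochromatic copies of $K_t$ is strictly below $1$, so that some coloring in the support avoids both configurations and hence certifies ${\rm GR}_k(s,t) > n$.

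Next I would evaluate the two probabilities that drive the computation. For a fixed set $S$ with $|S| = s$, let $A_S$ be the event that $[S]^2$ is rainbow. Now $S$ is rainbow precisely when the $\binom{s}{2}$ edges it spans receive pairwise distinct colors; summing the product $p_{x_1}\cdots p_{x_{\binom{s}{2}}}$ over all injective color assignments and grouping by the underlying $\binom{s}{2}$-element color set yields
$$
\Pr[A_S] = \binom{s}{2}! \sum_{\{x_1, \ldots, x_{\binom{s}{2}}\} \subseteq \{1, \ldots, k\}} p_{x_1} p_{x_2} \cdots p_{x_{\binom{s}{2}}},
$$
where the factor $\binom{s}{2}! = \left(\tfrac{s(s-1)}{2}\right)!$ counts the orderings in which a chosen set of distinct colors can be distributed among the edges. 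For a fixed set $T$ with $|T| = t$, let $B_T$ be the event that $[T]^2$ is monochromatic; since all $\binom{t}{2}$ edges of $T$ must carry one common color,
$$
\Pr[B_T] = \sum_{x = 1}^{k} p_x^{\binom{t}{2}}.
$$

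Finally I would close the argument with a union bound. Since the events $A_S$ all share a common probability and likewise the $B_T$, subadditivity gives
$$
\Pr\!\left[\left(\bigvee_{|S|=s} A_S\right) \vee \left(\bigvee_{|T|=t} B_T\right)\right] \leq \sum_{|S|=s}\Pr[A_S] + \sum_{|T|=t}\Pr[B_T] = \binom{n}{s}\Pr[A_S] + \binom{n}{t}\Pr[B_T].
$$
The right-hand side is exactly the left-hand side of the theorem's hypothesis, and by assumption it is strictly less than $1$. Hence with positive probability the random coloring contains neither a rainbow $K_s$ nor a monochromatic $K_t$, so such a coloring exists and ${\rm GR}_k(s,t) > n$.

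Once the two probabilities are pinned down the rest is routine bookkeeping, so I do not anticipate a genuine obstacle. The only step that needs care is the evaluation of $\Pr[A_S]$: one must recognize that the displayed sum runs over $\binom{s}{2}$-element subsets of the color set and that the symmetric product must be multiplied by $\binom{s}{2}!$ to account for every edge-labeled assignment of those distinct colors, rather than conflating the ordered and unordered counts. It is also worth observing that this bound follows purely from the first-moment (union-bound) method and, unlike the companion flexible-probability result, does not invoke the Lov\'{a}sz Local Lemma.
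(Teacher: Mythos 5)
Your proposal is correct and follows essentially the same route as the paper: the paper's own proof simply says to rerun the existence (first-moment/union-bound) argument of Theorem~\ref{th-R(k,k)} with $\Pr[\{u_i,u_j\}\text{ is }c_x]=p_x$ in place of $k^{-1}$, which is exactly what you do, and you correctly supply the omitted computations of $\Pr[A_S]$ and $\Pr[B_T]$ that identify the hypothesis as the union bound.
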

\begin{proof}
We use the existence argument of Theorem \ref{th-R(k,k)}, replacing
by (\ref{eq1-1}),
$$
\Pr[\{u_i,u_j\} \ is \ c_x]=p_x,
$$
where $u_i,u_j\in V(K_n)$, $c_1,c_2,\ldots,c_k$ are the all colors
and $1\leq x\leq k$, for all $i,j$ and making these probabilities
mutually independent.

For $S$, $|S|=s$ let $A_S$ be the event ``$[S]^2$ is rainbow," and
for $T$, $|T|=t$, let $B_T$ be the event ``$[T]^2$ is
monochromatic." Then
$$
\Pr\left[\left(\bigvee_{|S|=s}A_S\right)\bigvee
\left(\bigvee_{|T|=t}B_T\right)\right]<1,
$$
so the desired coloring of $K_n$ exists.
\end{proof}

\begin{theorem}\label{th2-3}
Let $k,s,t$ be two positive integers with $r,s\geq 6$ and $k\geq
{s\choose 2}$. Then
$$
\operatorname{gr}_k(s,t)>\frac{1}{\beta
c_2}\left(\frac{(t-1)N^{-1/\gamma}}{\ln
((t-1)N^{-1/\gamma})}\right)^{\beta},
$$
where
$$
\beta=\left[{s\choose 2}-1\right]/(s+1), \ \gamma={s\choose 2}-1, \
\ N={s\choose 2}(k-1)^{2-{s\choose 2}}(k-2)(k-3)\cdots
\left(k-{s\choose 2}+1\right).
$$
\end{theorem}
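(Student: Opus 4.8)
The plan is to run the random-colouring argument of Theorems~\ref{th-R(k,k)} and~\ref{th2-2} once more, but to replace the first-moment union bound by the Local Lemma in the form of Lemma~\ref{Corollary-LLL}; this is exactly the device that lets $n$ be pushed past the threshold coming from the union bound. I would colour each edge of $K_n$ independently and uniformly, $\Pr[\{u_i,u_j\}\text{ is }c_x]=k^{-1}$, and retain the two families of bad events from Theorem~\ref{th-R(k,k)}: for $|S|=s$ let $A_S$ be ``$[S]^2$ is rainbow'', with $\Pr[A_S]=k(k-1)\cdots(k-{s\choose 2}+1)/k^{{s\choose 2}}$, and for $|T|=t$ let $B_T$ be ``$[T]^2$ is monochromatic'', with $\Pr[B_T]=k^{1-{t\choose 2}}$. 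If the Local Lemma certifies $\Pr[\bigwedge_{|S|=s}\overline{A_S}\wedge\bigwedge_{|T|=t}\overline{B_T}]>0$, then some colouring of $K_n$ avoids both a rainbow $K_s$ and a monochromatic $K_t$, so ${\rm GR}_k(s,t)>n$.

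The dependency graph joins two bad events exactly when their vertex sets meet in at least two vertices (that is, share an edge). For a fixed $A_S$ the number of rainbow neighbours is dominated by ${s\choose 2}\binom{n}{s-2}$ and the number of monochromatic neighbours by ${s\choose 2}\binom{n}{t-2}$; for a fixed $B_T$ the two counts are bounded by ${t\choose 2}\binom{n}{s-2}$ and ${t\choose 2}\binom{n}{t-2}$. A short manipulation shows that ${s\choose 2}\Pr[A_S]$ equals $N$ up to the bounded factor $(k/(k-1))^{{s\choose 2}-1}$, so the governing quantity (rainbow degree)$\times$(rainbow probability) is of order $\binom{n}{s-2}N$. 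I would then invoke Lemma~\ref{Corollary-LLL} with a single weight $y_1$ on every $A_S$ and a single weight $y_2$ on every $B_T$; since the events fall into only two symmetry classes, the infinitely many hypotheses collapse to the two inequalities
\[
\ln y_1 > y_1\binom{n}{s-2}N + y_2{s\choose 2}\binom{n}{t-2}k^{1-{t\choose 2}}, \qquad \ln y_2 > y_1\frac{{t\choose 2}}{{s\choose 2}}\binom{n}{s-2}N + y_2{t\choose 2}\binom{n}{t-2}k^{1-{t\choose 2}}.
\]

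Finally I would choose $y_1$ and $y_2$ to balance these constraints against each other and solve for the largest admissible $n$, mirroring Spencer's optimisation for off-diagonal Ramsey numbers. Eliminating the weights turns the pair into a single relation of the shape $n^{s+1}\lesssim (t-1)^{\gamma}/\big(N(\ln n)^{\gamma}\big)$ with $\gamma={s\choose 2}-1$; setting $x=(t-1)N^{-1/\gamma}$ recasts this as $n\lesssim (x/\ln x)^{\beta}$ with $\beta=\gamma/(s+1)=[{s\choose 2}-1]/(s+1)$, the absolute constant absorbed into $1/(\beta c_2)$ coming from inverting $y\mapsto y/\ln y$. This is exactly the claimed bound, and the hypotheses $s,t\ge 6$ are used only to guarantee $\beta\ge 1$ and $\ln((t-1)N^{-1/\gamma})>0$, so that the inversion and the monotonicity of $x/\ln x$ are legitimate. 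The main obstacle will not be the Local-Lemma bookkeeping but this two-parameter optimisation: one must pick $y_1,y_2$ (and, if the flexible probabilities $p_x$ of Theorem~\ref{th2-2} are used in place of the uniform $k^{-1}$, those as well) so that both inequalities hold simultaneously while $n$ is as large as possible, and then invert the transcendental relation cleanly enough to recover the stated constant. The delicate point is controlling the cross terms coupling the two families --- in particular checking that the monochromatic contribution ${s\choose 2}\binom{n}{t-2}k^{1-{t\choose 2}}$ appearing in the rainbow inequality, and the rainbow contribution in the monochromatic inequality, do not overwhelm the two diagonal terms that fix the exponent $\beta$.
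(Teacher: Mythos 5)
There is a genuine gap: you run the Local Lemma over the \emph{uniform} colouring $\Pr[\{u_i,u_j\}\text{ is }c_x]=k^{-1}$, and under that measure the argument cannot produce a lower bound that grows with $t$. With uniform probabilities, $\Pr[A_S]=k(k-1)\cdots\bigl(k-{s\choose 2}+1\bigr)/k^{{s\choose 2}}$ is a fixed positive constant depending only on $s$ and $k$. Your first displayed inequality, $\ln y_1 > y_1\binom{n}{s-2}N+\cdots$, therefore forces $\binom{n}{s-2}N=O(1)$ (since $\ln y_1/y_1\le 1/e$), i.e.\ it caps $n$ by a quantity depending only on $s$ and $k$ and not on $t$ at all. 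The relation $n^{s+1}\lesssim (t-1)^{\gamma}/\bigl(N(\ln n)^{\gamma}\bigr)$ that you claim to extract by ``eliminating the weights'' cannot emerge from these two inequalities, because $t$ enters them only through the negligible terms $\binom{n}{t-2}k^{1-{t\choose 2}}$; there is no mechanism trading rainbow probability against monochromatic probability. The parenthetical remark that one could instead use the flexible probabilities of Theorem~\ref{th2-2} gestures at the missing idea but does not supply it: the entire content of the theorem lives in that choice.

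The paper's proof (following Spencer's off-diagonal Ramsey argument) makes the essential move you omit: it colours each edge with a \emph{biased} distribution, giving each of $c_1,\dots,c_{k-1}$ probability $p/(k-1)$ and $c_k$ probability $1-p$, with $p$ a free parameter. A rainbow $K_s$ must use at least ${s\choose 2}-1$ minority colours, so $\Pr[A_S]\le Np^{{s\choose 2}-1}$, while $\Pr[B_T]\le(1-p)^{{t\choose 2}-1}$; the Local Lemma condition on the rainbow events forces $p\asymp n^{-(s+1)/\gamma}N^{-1/\gamma}$, and substituting this into the monochromatic condition yields $t-1\asymp n^{1/\beta}(\ln n)N^{1/\gamma}$, which is precisely where the exponent $\beta=\bigl[{s\choose 2}-1\bigr]/(s+1)$ and the $t/\ln t$ shape of the bound come from. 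Without the parameter $p$ there is nothing to optimise and the stated bound is unreachable. (A smaller point: your claim that $s,t\ge 6$ is needed to ensure $\beta\ge 1$ is not the operative reason --- $\beta\ge 1$ already for $s=4$ --- the hypotheses are there so that the asymptotic choices of $p$, $y$, $z$ and the sign condition on $c_3+c_2-c_1c_2^2/4$ can be satisfied.)
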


\begin{proof}
Let the edges of $K_n$ be independently $k$-colored with the
probability that an edge is colored $c_i \ (1\leq i\leq k-1)$ always
being $\frac{p}{k-1}$, and $c_k$ being $1-p$. To each $s$-element
subset of vertices $S$ associate the event $A_{S}$ that all the
edges spanned by $S$ have colored rainbow. To each $t$-element
subset of vertices $T$ associate the event $B_{T}$ that all the
edges spanned by $T$ have colored monochromatic. Observe that
$\operatorname{gr}_k(s,t)>n$ if
$$
\Pr\left[\left(\bigwedge_S\overline{A_S}\right)\bigwedge
\left(\bigwedge_T\overline{B_T}\right)\right]>0.
$$

Let $\Gamma$ denote the graph ${n\choose s}+{n\choose t}$ vertices
corresponding to all possible $A_S$ and $B_T$, where $\{A_S,B_T\}$
is an edge of $\Gamma$ if and only if $|S\cap T|\geq 2$ (i.e., the
events $A_S$ and $B_T$ are independent), the same applies to pairs
of the form $\{A_S,A_{S'}\}$ and $\{B_T,B_{T'}\}$. Let $N_{AA}$
denote the number of vertices of the form $A_S$ for some $S$ joined
to some other vertex of this form, and let $N_{AB},N_{BA}$ and
$N_{BB}$ be defined analogously. If there exist positive $p,y,z$
such that
\begin{equation}          \label{eq2}
\log y>y\Pr[A_S](N_{AA}+1)+z\Pr[B_T]N_{AB},\ \ \ \  \log
z>y\Pr[A_S]N_{BA}+z\Pr[B_T](N_{BB}+1),
\end{equation}
then ${\rm GR}_k(s,t)>n$. Since
\begin{eqnarray*}
\Pr[A_S]&=&(k-1)(k-2)\cdots \left(k-{s\choose
2}\right)\left(\frac{p}{k-1}\right)^{s\choose
2}\\[0.2cm]
& &+{s\choose 2}(1-p)(k-1)(k-2)\cdots \left(k-{s\choose
2}+1\right)\left(\frac{p}{k-1}\right)^{{s\choose 2}-1}\\[0.2cm]
&\leq &{s\choose 2}(k-1)(k-2)\cdots \left(k-{s\choose
2}+1\right)\left(\frac{p}{k-1}\right)^{{s\choose
2}-1}\left[\left(k-{s\choose
2}\right)\left(\frac{p}{k-1}\right)+(1-p)\right]\\[0.2cm]
&\leq &Np^{{s\choose 2}-1},
\end{eqnarray*}
and
\begin{eqnarray*}
\Pr[B_T]&=&(1-p)^{{t\choose
2}}+(k-1)\left(\frac{p}{k-1}\right)^{{t\choose 2}}\leq
(1-p)^{{t\choose
2}}+(k-1)\left(\frac{p}{k-1}\right)^{{t\choose 2}}\\[0.2cm]
&\leq&(1-p)^{{t\choose 2}}+p\left(\frac{p}{k-1}\right)^{{t\choose
2}-1}\leq (1-p)^{{t\choose 2}}+p(1-p)^{{t\choose
2}-1}=(1-p)^{{t\choose 2}-1},
\end{eqnarray*}
it follows that
$$
N(A,B)={n\choose t}-{n-s\choose t}-s{n-s-1\choose t-1}\leq {n\choose
t}, \ \ N(A,A)+1\leq {n\choose s},
$$
$$
N(B,B)+1={n\choose t}-{n-t\choose t}-t{n-t\choose t-1}<{n\choose t},
\  N(B,A)\leq {n\choose s}.
$$
Set
$$
p=c_1n^{-1/\beta}\cdot {N}^{(-1)/\gamma}, \ \ \ \
t-1=c_2n^{1/\beta}(\ln n) {N}^{1/\gamma},
$$
and
$$
z=\exp\left[c_3n^{1/\beta}(\ln n)^2{N}^{1/\gamma}\right], \ \ \ \
y=1+\epsilon,
$$
where
$$
\ln (1+\epsilon)>\frac{(1+\epsilon)c_1^{{s\choose
2}-1}}{n}+\exp(n^{1/\beta}(\ln
n)^2N^{1/\gamma})\left(c_3+c_2-\frac{c_1c_2^2}{4}\right).
$$

Observe that
$$
\ln y>y\cdot Np^{{s\choose 2}-1}\cdot {n\choose
s}+z\left[(1-p)^{{t\choose 2}-1}\right]\cdot {n\choose t}
$$
and
$$
\ln z>y\cdot Np^{{s\choose 2}-1}\cdot {n\choose s}+
z\left[(1-p)^{{t\choose 2}-1}\right]\cdot {n\choose t}.
$$

If $c_3+c_2-\frac{c_1c_2^2}{4}<0$ and $t$ is large, then the
equations (\ref{eq2}) hold. Since
$$
t-1=c_2n^{1/\beta}(\ln n) {N}^{1/\gamma}=(\beta c_2)n^{1/\beta}(\ln
n^{1/\beta}) {N}^{1/\gamma}<(\beta c_2)n^{1/\beta}\ln
(t{N}^{-1/\gamma}) ,
$$
it follows that
$$
n^{1/\beta}>\frac{(t-1)\cdot {N}^{1/\gamma}}{(\beta c_2)\ln
((t-1){N}^{-1/\gamma})},
$$
and hence the result follows.
\end{proof}

\subsection{Results for Gallai-Ramsey number}

For a fixed probability of receiving colors for each edge, we can
derive the following lower bound of $\operatorname{gr}_k(G,H)$.
\begin{theorem}\label{th-2-4}
Let $G,H$ be two graphs of order $r,s\geq 4$ and size $m_s,m_t$,
respectively. Let $x^*=\min\{x\,|\,{x\choose 2}\geq m_s\}$ and
$y^*=\min\{y\,|\,{y\choose 2}\geq m_t\}$. For $k\geq 2$,
$$
\operatorname{gr}_k(G,H)>\frac{\ell}{e}\cdot \left(k(k-1)(k-2)\cdots
(k-m_s+1)X\left(\frac{1}{k}\right)^{m_s}+k\left(\frac{1}{k}\right)^{m_t}Y\right)^{(-1)/\ell},
$$
where
$$
X={{s\choose 2}\choose m_s}-\sum_{i=1}^{s-x^*}{s\choose
i}{{s-i\choose 2}\choose m_s}, \ Y={{t\choose 2}\choose
m_t}-\sum_{i=1}^{t-y^*}{t\choose i}{{t-i\choose 2}\choose m_t}.
$$
\end{theorem}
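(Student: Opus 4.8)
The plan is to run the same first-moment/union-bound argument as in Theorem~\ref{th-R(k,k)}, replacing the complete graphs $K_s,K_t$ by $G,H$ and replacing the two per-event probabilities accordingly. First I would fix the random model exactly as in (\ref{eq1-1}): colour every edge of $K_n$ independently and uniformly from $c_1,\dots,c_k$, each with probability $k^{-1}$. For a single fixed copy of $G$, which spans $m_s$ edges, the probability that it is rainbow is the probability that $m_s$ prescribed edges receive pairwise distinct colours, namely $k(k-1)\cdots(k-m_s+1)\left(\frac{1}{k}\right)^{m_s}$; for a single fixed copy of $H$ the probability that it is monochromatic is $k\left(\frac{1}{k}\right)^{m_t}$. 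These are precisely the two scalar factors appearing in the statement, so the remaining work is to count how many copies of $G$ and of $H$ a union bound must range over.

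Writing $A_S$ for the event ``some copy of $G$ has vertex set $S$ and is rainbow'' (over $s$-sets $S$) and $B_T$ analogously for $H$ (over $t$-sets $T$), a union bound inside a fixed $S$ gives $\Pr[A_S]\le \big(\#\text{copies of }G\text{ on }S\big)\cdot k(k-1)\cdots(k-m_s+1)\left(\frac{1}{k}\right)^{m_s}$, and similarly for $B_T$. The quantity $X$ is intended to bound the number of copies of $G$ on a fixed labelled $s$-set: of the $\binom{\binom{s}{2}}{m_s}$ ways of choosing $m_s$ edges among the $\binom{s}{2}$ pairs, one discards those that leave some vertex isolated, i.e.\ those whose edges actually occupy fewer than $s$ vertices. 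Since any $m_s$-edge graph needs at least $x^*$ vertices, for $i>s-x^*$ the remaining $s-i<x^*$ vertices cannot hold $m_s$ edges and $\binom{\binom{s-i}{2}}{m_s}=0$, which explains the truncation of the correction sum at $i=s-x^*$; $Y$ is obtained identically with $(t,m_t,y^*)$ replacing $(s,m_s,x^*)$.

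Then I would combine everything. By a final union bound over all $s$-sets and all $t$-sets,
$$
\Pr\!\left[\Big(\bigvee_{S}A_S\Big)\vee\Big(\bigvee_{T}B_T\Big)\right]\le \binom{n}{s} X\, k(k-1)\cdots(k-m_s+1)\left(\frac{1}{k}\right)^{m_s}+\binom{n}{t} Y\, k\left(\frac{1}{k}\right)^{m_t}.
$$
Whenever the right-hand side is $<1$, a colouring avoiding both a rainbow $G$ and a monochromatic $H$ must exist, so ${\rm gr}_k(G,H)>n$. To extract the displayed bound I would apply $\binom{n}{j}\le (ne/j)^j$ and, exactly as in the $s\le t$ versus $s>t$ split at the end of Theorem~\ref{th-R(k,k)}, factor out the larger of $\binom{n}{s},\binom{n}{t}$ so as to write the bound as a single binomial coefficient times the common factor $X P_G+Y P_H$, where $P_G,P_H$ abbreviate the two per-copy probabilities. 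Setting $\ell$ to the dominant order and solving $(ne/\ell)^{\ell}(X P_G+Y P_H)<1$ for $n$ yields $n<\frac{\ell}{e}\,(X P_G+Y P_H)^{-1/\ell}$, which is the claim.

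The step I expect to be the genuine obstacle is the counting, i.e.\ verifying that $X$ and $Y$ really are valid upper bounds for the number of copies of $G$ and $H$ on a fixed vertex set. The clean count of $m_s$-edge subgraphs with no isolated vertex is the \emph{alternating} inclusion--exclusion sum $\sum_{i\ge 0}(-1)^i\binom{s}{i}\binom{\binom{s-i}{2}}{m_s}$, and one must check that the sign-dropped expression defining $X$ still dominates the actual number of copies of $G$ (it over-counts by lumping all non-isomorphic $m_s$-edge graphs together, but the omitted positive inclusion--exclusion terms pull in the opposite direction, so this comparison genuinely needs to be argued rather than asserted). A secondary, purely bookkeeping point is that comparing $\binom{n}{s}$ with $\binom{n}{t}$ requires $n\ge 2\max\{s,t\}$, which should be recorded as a mild hypothesis; everything else follows the pattern of Theorem~\ref{th-R(k,k)}.
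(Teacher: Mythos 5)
Your proposal follows essentially the same route as the paper's own proof: the identical uniform random $k$-colouring, the same per-copy rainbow and monochromatic probabilities, the same union bound over $s$-sets and $t$-sets, and the same $\binom{n}{j}\le (ne/j)^j$ extraction of the bound (the paper merely packages the two binomial coefficients into a single $\binom{n}{\ell}$ with $\ell=\max\{\min\{s,n-s\},\min\{t,n-t\}\}$ rather than splitting into cases). The counting issue you flag — whether the sign-dropped inclusion--exclusion expressions $X$ and $Y$ genuinely upper-bound the number of copies on a fixed vertex set — is simply asserted without argument in the paper as well, so your account is, if anything, more careful than the original.
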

\begin{proof}
More precisely, we show that if
$$
\left(\frac{ne}{\ell}\right)^{\ell}N<1,
$$
then ${\rm gr}_k(s,t)>n$, that is, there exists a $k$-coloring of
$K_n$ with vertex set $\{u_1,u_2,\ldots,u_n\}$ containing neither a
rainbow $G$ nor a monochromatic $H$. Consider a random $k$-coloring
of $K_n$, where the color of each edge is determined by the toss of
a fair coin. More precisely, we have a probability space whose
elements are the $k$-colorings of $K_n$, and whose probabilities are
determined by setting
\begin{equation}          \label{eq1-3}
\Pr[\{u_i,u_j\} \ is \ c_x]=k^{-1},
\end{equation}
where $c_1,c_2,\ldots,c_k$ are the all colors and $1\leq x\leq k$,
for all $i,j$ and making these probabilities mutually independent.

Thus there are $k^{{n\choose 2}}$ colorings, each with probability
$k^{-{n\choose 2}}$. For any set of vertices $S$, $|S|=s$, let $A_S$
denote the event that all the induced graphs spanned by $S$ contains
a rainbow $G$. Then
$$
\Pr[A_S]\leq k(k-1)(k-2)\cdots
(k-m_s+1)X\left(\frac{1}{k}\right)^{m_s}.
$$
For any set of vertices $T$, $|T|=t$, let $B_T$ denote the event
that all the induced graphs spanned by $T$ contains a monochromatic
$H$. Then
$$
\Pr[B_T]\leq  k\left(\frac{1}{k}\right)^{m_t}Y.
$$
as the ${t\choose 2}$ ``coin flips'' to determine the colors of
$[S]^2$ must be the same.

The event ``some $S$ of vertices that $G$ is rainbow'' is
represented by $\bigvee_{|S|=s}A_S$, and ``some $T$ of vertices that
$H$ is monochromatic'' is represented by $\bigvee_{|T|=t}B_T$. Then
\begin{eqnarray*}
\Pr\left[\left(\bigvee_{|S|=s}A_S\right)\bigvee
\left(\bigvee_{|T|=t}B_T\right)\right]
&\leq&\Pr\left[\left(\bigvee_{|S|=s}A_S\right)\right]+\Pr\left[\left(\bigvee_{|T|=t}B_T\right)\right]\\[0.2cm]
&\leq &\sum_{|S|=s}\Pr[A_S]+\sum_{|T|=t}\Pr[B_T]\\[0.2cm]
&=&{n\choose s}k(k-1)(k-2)\cdots (k-m_s+1)X\cdot \left(\frac{1}{k}\right)^{m_s}\\[0.2cm]
&&+{n\choose t}k\left(\frac{1}{k}\right)^{m_t}Y\\[0.2cm]
&\leq & {n\choose \ell}N,
\end{eqnarray*}
where $\ell=\max\{\min\{s,n-s\},\min\{t,n-t\}\}$ and
$$
N=k(k-1)(k-2)\cdots
(k-m_s+1)X\left(\frac{1}{k}\right)^{m_s}+k\left(\frac{1}{k}\right)^{m_t}Y.
$$
Furthermore, if
$$
{n\choose \ell}N<\left(\frac{ne}{\ell}\right)^{\ell}N<1,
$$
then
$$
n<\frac{\ell}{eN^{1/\ell}}.
$$
From the above argument, we have
$$
\operatorname{gr}_k(G,H)>\frac{\ell}{e}\cdot \left(k(k-1)(k-2)\cdots
(k-m_s+1)X\left(\frac{1}{k}\right)^{m_s}+k\left(\frac{1}{k}\right)^{m_t}Y\right)^{(-1)/\ell}.
$$
\end{proof}

For two general graphs, we can give lower bound for Gallai-Ramsey
number.
\begin{theorem}\label{th-2-5}
Let $G$ be a graph of order $s\geq 4$ and size $m_s$, respectively.
and $H$ is a complete graph of order $t$. Let $c_1,c_2,c_3$ be three
numbers with $c_3+\frac{c_2}{2}-\frac{c_1c_2^2}{2}<0$. For $k\geq
2$, if and $m_s\geq 2s$, then
$$
\operatorname{gr}_k(G,H)>\left(\frac{(t-1)(s+1)L^{(-1)/(m_s-1)}}{c_2(m_s-1)\ln
\left[{(t-1)L^{(-1)/(m_s-1)}}\right]}\right)^{(m_s-1)/(s+1)},
$$
where
$$
L=m_s(k-1)^{2-m_s}(k-2)(k-3)\cdots \left(k-m_s+1\right)X,
$$
and
$$
X={{s\choose 2}\choose m_s}-\sum_{i=1}^{s-x^*}{s\choose
i}{{s-i\choose 2}\choose m_s}.
$$
\end{theorem}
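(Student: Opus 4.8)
The plan is to run the biased-random-colouring argument of Theorem~\ref{th2-3} almost verbatim, the only genuine change being that the rainbow target is now an arbitrary graph $G$ of order $s$ and size $m_s$ rather than $K_s$; since $H=K_t$ is still complete, the monochromatic side is untouched. Concretely, I would colour the edges of $K_n$ independently, giving each colour $c_i$ with $1\le i\le k-1$ probability $\tfrac{p}{k-1}$ and giving $c_k$ probability $1-p$. For each $s$-set $S$ let $A_S$ be the event that the $K_s$ on $S$ contains a rainbow copy of $G$, and for each $t$-set $T$ let $B_T$ be the event that $T$ is monochromatic. Because $G$ has exactly $s$ vertices, every copy of $G$ inside $S$ must span all of $S$, and the quantity $X$ (with $x^*$ as in Theorem~\ref{th-2-4}) is exactly the inclusion--exclusion count of $m_s$-edge spanning subgraphs of $K_s$, so it bounds the number of copies of $G$ that the union bound must run over.

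The first concrete estimate is $\Pr[A_S]\le L\,p^{m_s-1}$. Fixing one copy of $G$ inside $S$ and splitting by whether the special colour $c_k$ occurs among its $m_s$ edges, the probability that this copy is rainbow is $(k-1)(k-2)\cdots(k-m_s)\bigl(\tfrac{p}{k-1}\bigr)^{m_s}+m_s(1-p)(k-1)(k-2)\cdots(k-m_s+1)\bigl(\tfrac{p}{k-1}\bigr)^{m_s-1}$, since a rainbow copy uses $c_k$ at most once. Bounding the first summand by $m_s$ times itself, factoring out $m_s(k-1)\cdots(k-m_s+1)\bigl(\tfrac{p}{k-1}\bigr)^{m_s-1}$, and using $(k-m_s)\tfrac{p}{k-1}+(1-p)\le1$, this is at most $m_s(k-1)^{2-m_s}(k-2)\cdots(k-m_s+1)\,p^{m_s-1}$; summing over the at most $X$ copies of $G$ in $S$ gives $\Pr[A_S]\le L\,p^{m_s-1}$. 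The monochromatic bound $\Pr[B_T]\le(1-p)^{\binom{t}{2}-1}$ carries over unchanged from Theorem~\ref{th2-3}.

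Next I would form the dependency graph $\Gamma$ on the $\binom{n}{s}+\binom{n}{t}$ events, joining two of them exactly when their index sets meet in at least two vertices, bound $N_{AA}+1$ and $N_{BA}$ by $\binom{n}{s}$ and $N_{AB}$ and $N_{BB}+1$ by $\binom{n}{t}$, and apply the asymmetric Local Lemma through the two inequalities of type~(\ref{eq2}). The parameters would be chosen in the same shape as in Theorem~\ref{th2-3} but with the new exponents $\gamma=m_s-1$ and $\beta=(m_s-1)/(s+1)$: namely $p=c_1n^{-1/\beta}L^{-1/\gamma}$, $t-1=c_2n^{1/\beta}(\ln n)L^{1/\gamma}$, $z=\exp[c_3n^{1/\beta}(\ln n)^2L^{1/\gamma}]$ and $y=1+\epsilon$, with $\epsilon$ taken large enough to absorb the $A$-side contribution $yLp^{m_s-1}\binom{n}{s}$.

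The delicate step, and the one I expect to be the main obstacle, is verifying that the two Local-Lemma inequalities of type~(\ref{eq2}) hold under the sign hypothesis $c_3+\tfrac{c_2}{2}-\tfrac{c_1c_2^2}{2}<0$. By the choice $\beta=(m_s-1)/(s+1)$ one has $(m_s-1)/\beta=s+1$, so the $A$-side term satisfies $\binom{n}{s}L\,p^{m_s-1}=\Theta(n^{-1})\to0$ and is harmless; the hypothesis $m_s\ge 2s$ keeps $\beta>1$, which is what is needed for the substitution to stay in the intended regime. Substituting the parameters into the $B$-side and using $(1-p)^{\binom{t}{2}-1}\le\exp\!\bigl(-p(\binom{t}{2}-1)\bigr)$ together with $\binom{t}{2}\sim\tfrac12(t-1)^2$ and $\ln\binom{n}{t}\lesssim t\ln n$, its dominant exponent takes the form $\bigl(c_3+\tfrac{c_2}{2}-\tfrac{c_1c_2^2}{2}\bigr)n^{1/\beta}(\ln n)^2L^{1/\gamma}$, which the hypothesis drives to $-\infty$; hence both inequalities survive for $t$ large and Lemma~\ref{Corollary-LLL} gives $\Pr[\bigwedge\overline{A_S}\wedge\bigwedge\overline{B_T}]>0$, i.e. ${\rm gr}_k(G,H)>n$. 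Finally, writing $\ln n=\beta\ln n^{1/\beta}$ in $t-1=c_2n^{1/\beta}(\ln n)L^{1/\gamma}$ and using $\ln n^{1/\beta}\lesssim\ln((t-1)L^{-1/\gamma})$ inverts this to $n^{1/\beta}>\frac{(t-1)L^{-1/\gamma}}{\beta c_2\ln((t-1)L^{-1/\gamma})}$, and raising to the power $\beta=(m_s-1)/(s+1)$ yields exactly the claimed bound.
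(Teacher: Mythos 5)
Your proposal follows the paper's proof essentially verbatim: the same biased colouring with $c_k$ getting probability $1-p$, the same bounds $\Pr[A_S]\le Lp^{m_s-1}$ and $\Pr[B_T]\le(1-p)^{\binom{t}{2}-1}$, the same dependency-graph counts, the same parameter choices $p$, $t-1$, $z$, $y$ with exponent $(s+1)/(m_s-1)$, and the same inversion at the end. No substantive difference; if anything, you state the step bounding the first summand of $\Pr[A_S]$ as an inequality where the paper writes it as an equality, which is a small correction rather than a new route.
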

\begin{proof}
Let the edges of $K_n$ be independently $k$-colored with the
probability that an edge is colored $c_i \ (1\leq i\leq k-1)$ always
being $\frac{p}{k-1}$, and $c_k$ being $1-p$. For a vertex subset
$S$ with exactly $s$ vertices, the event $A_{S}$ that all the
induced graphs spanned by $S$ contains a rainbow $G$. For a vertex
subset $T$ with exactly $t$ vertices, the event $B_{T}$ that all the
induced graphs spanned by $T$ contains a rainbow $H$. Observe that
$\operatorname{gr}_k(G,H)>n$ if
$$
\Pr\left[\left(\bigwedge_S\overline{A_S}\right)\bigwedge
\left(\bigwedge_T\overline{B_T}\right)\right]>0.
$$

Let $\Gamma$ denote the graph ${n\choose s}+{n\choose t}$ vertices
corresponding to all possible $A_S$ and $B_T$, where $\{A_S,B_T\}$
is an edge of $\Gamma$ if and only if $|S\cap T|\geq 2$ and
$|E(G)\cap E(H)|=2$ (i.e., the events $A_S$ and $B_T$ are
independent), the same applies to pairs of the form $\{A_S,A_{S'}\}$
and $\{B_T,B_{T'}\}$. Let $N_{AA}$ denote the number of vertices of
the form $A_S$ for some $S$ joined to some other vertex of this
form, and let $N_{AB},N_{BA}$ and $N_{BB}$ be defined analogously.

If there exist positive $p,y,z$ such that
\begin{equation}          \label{eq4}
\log y>y\Pr[A_S](N_{AA}+1)+z\Pr[B_T]N_{AB},\ \ \ \  \log
z>y\Pr[A_S]N_{BA}+z\Pr[B_T](N_{BB}+1),
\end{equation}
then $gr_k(G,H)>n$. Since
\begin{eqnarray*}
\Pr[A_S]&\leq&(k-1)(k-2)\cdots (k-m_s)X\left(\frac{p}{k-1}\right)^{m_s}\\[0.2cm]
& &+m_s(1-p)(k-1)(k-2)\cdots \left(k-m_s+1\right)X\left(\frac{p}{k-1}\right)^{m_s-1}\\[0.2cm]
&=&m_s(k-1)(k-2)\cdots \left(k-m_s+1\right)X\left(\frac{p}{k-1}\right)^{m_s-1}\cdot \left[(k-m_s)\left(\frac{p}{k-1}\right)+1-p\right]\\[0.2cm]
&\leq&m_s(k-1)(k-2)\cdots \left(k-m_s+1\right)X\left(\frac{p}{k-1}\right)^{m_s-1}\\[0.2cm]
&=&m_s(k-1)^{2-m_s}(k-2)(k-3)\cdots
\left(k-m_s+1\right)Xp^{m_s-1}=Lp^{m_s-1}\\[0.2cm]
\end{eqnarray*}
and
\begin{eqnarray*}
\Pr[B_T]&\leq
&\left[(1-p)^{m_t}+(k-1)\left(\frac{p}{k-1}\right)^{m_t}\right]=
\left[(1-p)^{m_t}+p\left(\frac{p}{k-1}\right)^{m_t-1}\right]\\[0.2cm]
&\leq &
\left[(1-p)^{m_t}+p\left(1-p\right)^{m_t-1}\right]=\left(1-p\right)^{m_t-1}
\end{eqnarray*}
it follows that
$$
N(A,B)={n\choose t}-{n-s\choose t}-s{n-s-1\choose t-1}\leq {n\choose
t}, \ \ N(A,A)+1\leq {n\choose s},
$$
$$
N(B,B)+1={n\choose t}-{n-t\choose t}-t{n-t\choose t-1}<{n\choose t},
\  N(B,A)\leq {n\choose s}.
$$
Set
$$
p=c_1n^{-(s+1)/(m_s-1)}\cdot {L}^{(-1)/(m_s-1)}, \ \ \ \
t-1=c_2n^{(s+1)/(m_s-1)}(\ln n){L}^{1/(m_s-1)},
$$
and
$$
z=\exp\left[c_3n^{(s+1)/(m_s-1)}(\ln n)^2{L}^{1/(m_s-1)}\right], \ \
\ \ y=1+\epsilon.
$$
Observe that
$$
\ln y>y\cdot Lp^{m_s-1}\cdot {n\choose s}+z\cdot
\left(1-p\right)^{m_t-1}\cdot {n\choose t}
$$
and
$$
\ln z>y\cdot Lp^{m_s-1}\cdot {n\choose s}+ z\cdot
\left(1-p\right)^{m_t-1}\cdot {n\choose t}.
$$

Note that $m_t={t\choose 2}$. If
$c_3+\frac{c_2}{2}-\frac{c_1c_2^2}{2}<0$ and $t$ is large, then the
equations (\ref{eq4}) hold. Since
\begin{eqnarray*}
t&=&c_2n^{(s+1)/(m_s-1)}(\ln n){L}^{1/(m_s-1)}+1\\[0.2cm]
&=&c_2n^{(s+1)/(m_s-1)}\frac{m_s-1}{s+1}(\ln n^{(s+1)/(m_s-1)}){L}^{1/(m_s-1)}+1\\[0.2cm]
&\leq&c_2n^{(s+1)/(m_s-1)}\frac{m_s-1}{s+1}(\ln n^{(s+1)/(m_s-1)}){L}^{1/(m_s-1)}+1\\[0.2cm]
&\leq&c_2n^{(s+1)/(m_s-1)}\cdot \frac{m_s-1}{s+1}\cdot \ln
\left[{(t-1)L^{(-1)/(m_s-1)}}\right]\cdot {L}^{1/(m_s-1)}+1,
\end{eqnarray*}
it follows that
$$
n^{(s+1)/(m_s-1)}>\frac{(t-1)(s+1)L^{(-1)/(m_s-1)}}{c_2(m_s-1)\ln
\left[{(t-1)L^{(-1)/(m_s-1)}}\right]},
$$
and hence the result follows.
\end{proof}

\end{document}